\definecolor{deepblue}{rgb}{0,0,0.5}
\definecolor{deepred}{rgb}{0.6,0,0}
\definecolor{deeporange}{rgb}{1,0.15,0}
\definecolor{deepgreen}{rgb}{0,0.5,0}
\definecolor{deepgray}{rgb}{0.4,0.4,0.4}
\definecolor{gray}{rgb}{0.5,0.5,0.5}
\DeclareFixedFont{\ttm}{T1}{txtt}{m}{n}{12}
\newcommand\pythonstyle{\lstset{
	language=Python,
	basicstyle=\ttm,
	otherkeywords={=},
	morekeywords={[2]output},
	morekeywords={[3]cone_solver},
	keywordstyle=\color{deepred},
	keywordstyle={[2]\color{deeporange}},
	keywordstyle={[3]\color{deeporange}},
	emph={	print, 
			dict %
		},
	emphstyle=\color{deepblue},
	stringstyle=\color{deepgreen},
	literate=%
				*{0}{{\textcolor{deepblue}{0}}}{1}%
				{1}{{\textcolor{deepblue}{1}}}{1}%
				{2}{{\textcolor{deepblue}{2}}}{1}%
				{3}{{\textcolor{deepblue}{3}}}{1}%
				{4}{{\textcolor{deepblue}{4}}}{1}%
				{5}{{\textcolor{deepblue}{5}}}{1}%
				{6}{{\textcolor{deepblue}{6}}}{1}%
				{7}{{\textcolor{deepblue}{7}}}{1}%
				{8}{{\textcolor{deepblue}{8}}}{1}%
				{9}{{\textcolor{deepblue}{9}}}{1}%
				{.0}{{\textcolor{deepblue}{.0}}}{2}%
				{.1}{{\textcolor{deepblue}{.1}}}{2}%
				{.2}{{\textcolor{deepblue}{.2}}}{2}%
				{.3}{{\textcolor{deepblue}{.3}}}{2}%
				{.4}{{\textcolor{deepblue}{.4}}}{2}%
				{.5}{{\textcolor{deepblue}{.5}}}{2}%
				{.6}{{\textcolor{deepblue}{.6}}}{2}%
				{.7}{{\textcolor{deepblue}{.7}}}{2}%
				{.8}{{\textcolor{deepblue}{.8}}}{2}%
				{.9}{{\textcolor{deepblue}{.9}}}{2}%
				{Exception}{{\textcolor{deepblue}{Exception}}}{8}%
				{BROJA\_2PID}{{\textcolor{deepblue}{BROJA\_2PID}}}{8}%
				{**}{{\textcolor{deepred}{$^{**}$}}}{2}%
																	,
	frame=tb,                         
	showstringspaces=false,
	numbers=left,            
	numbersep=5pt, 
	numberstyle=\tiny\color{gray},
	commentstyle=\color{deepgray} 
}
}
\newcommand\pythonexternal[2][]{{
\pythonstyle
\lstinputlisting[#1]{#2}}
}
\newcommand\pythoninline[1]{{\pythonstyle\lstinline!#1!}} 
\newcommand\shellstyle{\lstset{
	language=bash,
	basicstyle=\ttm,
	keywordstyle=\color{deepred},
	emphstyle=\color{deepblue},
	stringstyle=\color{deepgreen},
	frame=tb,                         
	showstringspaces=false,
	numbers=left,            
	numbersep=5pt, 
	numberstyle=\tiny\color{gray},
	commentstyle=\color{deepgray} 
}
}
\newcommand\shellinline[1]{{\shellstyle\lstinline!#1!}}
\theoremstyle{plain}
\newtheorem{theorem}{Theorem}
\newtheorem*{theorem*}{Theorem}
\newtheorem{proposition}[theorem]{Proposition}
\newtheorem*{proposition*}{Proposition}
\newtheorem*{corollary*}{Corollary}
\newtheorem*{lemma*}{Lemma}
\newtheorem*{observation*}{Observation}
\newtheorem*{conjecture*}{Conjecture}
\newtheorem*{question*}{Question}
\newtheorem*{questions*}{Questions}
\newtheorem*{problem*}{Problem}
\newtheorem*{problems*}{Problems}
\newtheorem*{openproblem*}{Open Problem}
\theoremstyle{definition}
\newtheorem{definition}[theorem]{Definition}
\newtheorem*{definition*}{Definition}
\newtheorem*{example*}{Example}
\newtheorem*{exercise*}{Exercise}
\newtheorem*{notation*}{Notation}
\theoremstyle{remark}
\newtheorem*{remark*}{Remark}
\newtheorem*{remarks*}{Remarks}
\newtheorem*{claim*}{Claim}
\newcommand{\subclass}[1]{}
\newcommand{\enumTi}[1]{\renewcommand{\theenumi}{#1}}
\newcommand{\alphenumi}{\enumTi{\alph{enumi}}}
\newcommand{\romenumi}{\enumTi{\roman{enumi}}}
\newlength{\hspaceforlengthglumpf}
\renewcommand{\em}{\sl}
\newcommand{\lt}{\left}
\newcommand{\rt}{\right}
\newcommand{\abs}[1]{{\lt\lvert{#1}\rt\rvert}}
\newcommand{\nfrac}[2]{{\nicefrac{#1}{#2}}}
\newcommand{\NN}{\mathbb{N}}
\newcommand{\RR}{\mathbb{R}}
\DeclareMathOperator*{\Prb}{\mathbf{P}}
\newlength{\algotabbingwidth}
\DeclareMathOperator{\MI}{MI}
\DeclareMathOperator{\UI}{UI}
\DeclareMathOperator{\CI}{CI}
\DeclareMathOperator{\SI}{SI}
\DeclareMathOperator{\CP}{CP}
\DeclareMathOperator{\Ima}{Im}
\newcommand{\y}{\texttt{y}} \newcommand{\z}{\texttt{z}}
\newcommand{\norm}[1]{{\lt\lVert{#1}\rt\rVert}}
\begin{document}
\hypersetup{linkcolor=blue}
\title{BROJA-2PID: A robust estimator for bivariate partial information decomposition}
\author{Abdullah Makkeh, Dirk Oliver Theis\thanks{Supported by the Estonian Research Council, ETAG (\textit{Eesti Teadusagentuur}), through PUT Exploratory Grant \#620, and by the European Regional Development Fund through the Estonian Center of Excellence in Computer Science, EXCS.}, and Raul Vicente \\[1ex]
  \small Institute of Computer Science {\tiny of the } University of Tartu\\
  \small \"Ulikooli 17, 51014 Tartu, Estonia\\
  \small \{\texttt{makkeh,dotheis,raul.vicente.zafra}\}\texttt{@ut.ee}}



\maketitle

\begin{abstract}
    Makkeh, Theis, and Vicente found in~\cite{makkeh2017bivariate} that Cone Programming model is the most robust to compute the Bertschinger et al.~partial information decompostion (BROJA PID) measure~\cite{bertschinger-rauh-olbrich-jost-ay:quantify:2014}. We developed a production-quality robust software that computes the BROJA PID measure based on the Cone Programming model. In this paper, we prove the important property of strong duality for the Cone Program and prove an equivalence between the Cone Program and the original Convex problem.  Then describe in detail our software and how to use it.\newline\indent
    \textbf{Keywords:} Bivariate Information Decomposition; Cone Programming
\end{abstract}
\section{Introduction}
For random variables $X,Y,Z$ with finite range, consider the mutual information $\MI(X;Y,Z)$: the amount of information that the pair $(Y,Z)$ contain about~$X$. How can we quantify the contributions of $Y$ and~$Z$, respectively, to $\MI(X;Y,Z)$? This question is at the heart of \textit{(bivariate) partial information decomposition, PID~\cite{harder2013bivariate,williams2010nonnegative,griffith2014quantifying,bertschinger-rauh-olbrich-jost-ay:quantify:2014}.} Information theorists agree that there can be: information shared redundantly by $Y$, and~$Z$; information contained uniquely within~$Y$ but not within~$Z$; information contained uniquely within~$Z$ but not within~$Y$; and information that synergistically results from combining both $Y$ and~$Z$. The quantities are denoted by: $\SI(X;Y,Z)$; $\UI(X;Y\backslash Z)$, $\UI(X;Z\backslash Y)$; and $\CI(X;Y,Z)$.  All four of these quantities add up to $\MI(X;Y,Z)$; moreover, the quantity of total information that $Y$ has about $X$ is decomposed into the quantity of unique information that $Y$ has about $X$ and shared information that $Y$ shares with $Z$ about $X$, similarly, for quantity of total information that $Z$ has about $X$ and so, $\SI(X;Y,Z) + \UI(X;Y\backslash Z) = \MI(X;Y)$, and $\SI(X;Y,Z) + \UI(X;Z\backslash Y) = \MI(X;Z)$.  Hence, if the joint distribution of~$(X,Y,Z)$ is known, then there is (at most) one degree of freedom in defining a bivariate PID.  In other words, defining the value of one of the information quantities defines a bivariate PID.

Bertschinger et al.~\cite{bertschinger-rauh-olbrich-jost-ay:quantify:2014} have given a definition of a bivariate PID where the synergistic information is defined as follows:
\begin{equation}\label{eq:broja-short-def}
		\CI(X;Y,Z) := \max( \MI(X;Y,Z) - \MI(X';Y',Z') )
\end{equation}
where the maximum extends over all triples of random variables $(X',Y',Z')$ with the same 12,13-marginals as $(X,Y,Z)$, i.e., $P(X=x, Y=y) = P(X'=x,Y'=y)$ for all $x,y$ and $P(X=x, Z=z) = P(X'=x,Z'=z)$ for all $x,z$.  It can easily be verified that this amounts to maximizing a concave function over a compact, polyhedral set easily described by inequalities.  Hence, using standard theorems of convex optimization, BROJA's bivariate PID can be efficiently approximated to any given precision.

In practice, computing $\CI$ has turned out to be quite a bit more challenging, owed to the fact that the objective function is not smooth on the boundary of the feasible region, which results in numerical difficulties for the state-of-the-art interior point algorithms for solving convex optimization problems.  We refer to~\cite{makkeh2017bivariate} for a thorough discussion of this phenomenon.

Due to these challenges, and the need in the scientific computing community to have a reliable, easily usable software for computing the BROJA bivariate PID, we made available on GitHub a Python implementation of our best method for computing the BROJA bivariate PID (\href{https://github.com/Abzinger/BROJA_2PID/}{github.com/Abzinger/BROJA\_2PID/}).  The solver is based on a conic formulation of the problem and thus a Cone Program is used to compute the BROJA bivariate PID. This paper has two contributions. Firstly, we prove the important property of strong duality for the Cone Program and prove an equivalence between the Cone Program and the original Convex problem. Secondly, we describe in detail our software and how to use it.

This paper is organized as follows. In the remainder of this section, we define some notation we will use throughout, and review the Convex Program for computing the BROJA bivariate PID from~\cite{bertschinger-rauh-olbrich-jost-ay:quantify:2014}.  In the next section we review the math underlying our software to the point which is necessary to understand how it works and how it is used.  In Section~\ref{sec:imp}, we walk the reader through an example of how to use the software --- and then explain its inner workings and its use in detail. In Section~\ref{sec:case-study}, we present some computations on larger problem instances, and discuss how the method scales up.  We conclude the paper by discussing our plans for the future development of the code.

\subsection{Notation and background}
Denote by $\mathbf{X}$ the range of the random variable~$X$, by $\mathbf{Y}$ the range of~$Y$, and by $\mathbf{Z}$ the range of~$Z$.  We identify joint probability density functions with points in $\RR^{\mathbf W}$, e.g., the joint probability distribution of $(X,Y,Z)$ is a vector in $\RR^{\mathbf{X}\times\mathbf{Y}\times\mathbf{Z}}$.  (We measure information in nats, unless otherwise stated.) We use the following notational convention.
\begin{quote}
An asterisk stands for ``sum over everything that can be plugged in instead of the $*$''.  E.g., if $p,q\in\RR^{\mathbf{X}\times\mathbf{Y}\times\mathbf{Z}}$,
\begin{equation*}
	q_{x,y,*} = \sum\nolimits_{w\in Z} q_{x,y,w};\qquad p_{*,y,z} q_{*,y,z} = \left( \sum\nolimits_{u\in X} p_{u,y,z} \right) \left(\sum\nolimits_{u\in X} q_{u,y,z}\right)
\end{equation*}
We do not use the symbol~$*$ in any other context.
\end{quote}
We define the following notation for the marginal distributions of $(X,Y,Z)$: With~$p$ the joint probability density function of $(X,Y,Z)$: 
\begin{align*}
	b^\y_{x,y} &:= p_{x,y,*} = \Prb\bigl( X = x \land Y = y \bigr) && \text{for all $x\in \mathbf{X}$, $y\in \mathbf{Y}$}\\
	b^\z_{x,z} &:= p_{x,*,z} = \Prb\bigl( X = x \land Z = z \bigr) && \text{for all $x\in \mathbf{X}$, $y\in \mathbf{Y}$.}
\end{align*}

\noindent\medskip%
These notations allow us to write the Convex Program from~\cite{bertschinger-rauh-olbrich-jost-ay:quantify:2014} in a succinct way.
Unraveling the objective function of~\eqref{eq:broja-short-def}, we find that, given the marginal conditions, is equal, up to a constant not depending on $X',Y',Z'$ to the conditional entropy $H(X'\mid Y',Z')$. Replacing maximizing $H(\dots)$ by minimizing $-H(\dots)$, we find~\eqref{eq:broja-short-def} to be equivalent to the following Convex Program: 
\begin{equation}\label{eq:the-BROJA-CP}\tag{\mbox{CP}}
	\begin{aligned}
		\text{minimize }         &\sum_{x,y,z} q_{x,y,z}\ln\frac{q_{x,y,z}}{q_{*,y,z}}  &&\text{ over $q\in\RR^{\mathbf{X}\times \mathbf{Y}\times \mathbf{Z}}$}\\
		\text{subject to }\quad  & q_{x,y,*} = b^\y_{x,y}        &&\text{ for all $(x,y)\in \mathbf{X}\times \mathbf{Y}$}\\
		{}                       & q_{x,*,z} = b^\z_{x,z}        &&\text{ for all $(x,z)\in \mathbf{X}\times \mathbf{Z}$}\\
		{}                       & q_{x,y,z} \ge 0               &&\text{ for all $(x,y,z)\in \mathbf{X}\times \mathbf{Y}\times \mathbf{Z}$.}
	\end{aligned}
\end{equation}
\section{Cone Programming Model for Bivariate PID}\label{sec:expcone}
In~\cite{makkeh2017bivariate}, we introduce a model for computing the BROJA bivariate PID based on a so-called ``Cone Programming''.  Cone Programming is a far reaching generalization of Linear Programming: The usual inequality constraints which occur in Linear Programs can be replaced by so-called ``generalized inequalities'' --- see below for details. Similar to Linear Programs, dedicated software is available for Cone Programs, but each type of generalized inequalities (i.e., each cone) requires its own algorithms. The specific type of generalized inequalities needed for the computation of the BROJA bivariate PID requires solvers for the so-called ``Exponential Cone'', of which a few are available.

In the computational results of~\cite{makkeh2017bivariate}, we found that the Cone Programming approach (based on one of the available solvers) was, while not the fastest, the most robust of all methods for computing the BROJA bivariate PID which we tried (and we tried a lot).  This is why our software is based on the Exponential Cone Programming model.

In this section, we review the mathematical definitions to the point in which they are necessary to understand our model and the properties of the software based on it.

\subsection{Background on Cone Programming}
A nonempty \textit{closed convex cone} $\mathcal K\subseteq\RR^m$ is a closed set which is \textit{convex}, i.e., for any $x,y\in\mathcal K$ and $0\le\theta\le1$ we have 
	$$\theta x + (1-\theta)y \in\mathcal K,$$ 
and is a \textit{cone}, i.e., for any $x\in\mathcal K$ and $\theta\ge 0$ we have 
	$$\theta x\in\mathcal K.$$
E.g., $\RR_+^n$ is a closed convex cone. \textit{Cone Programming} is a far-reaching generalization of Linear Programming, which may contain so-called \textit{generalized inequalities}: For a fixed closed convex cone~$\mathcal K\subseteq\RR^m$, the generalized inequality ``$a \le_{\mathcal K} b$'' denotes $b-a \in \mathcal K$ for any $a,b\in\RR^m$. Recall the primal-dual pair of Linear Programming. The \textit{primal problem} is, 

\begin{equation}\label{eq:lp-primal}
 	\begin{aligned}
 		\text{minimize } 	& c^Tw\\
 		\text{subject to } 	& Aw = b \\
 							& Gw \le h
 	\end{aligned}
\end{equation}
over variable $w\in\RR^n$, where $A\in\RR^{m_1\times n}, G\in\RR^{m_2\times n}, c\in\RR^n,b\in\RR^{m_1},$ and $h\in\RR^{m_2}$. And its \textit{dual problem} is, 
\begin{equation}\label{eq:lp-dual}
 	 	\begin{aligned}
 	 		\text{maximize } 	& -b^T\eta - h^T\theta\\
 	 		\text{subject to } 	& -A^T\eta - G^T\theta = c\\
 	 		 	 				& \theta \ge 0.
 	 	\end{aligned}
\end{equation}
There are two properties that the pair~\eqref{eq:lp-primal} and \eqref{eq:lp-dual} may or may not have, namely, weak and strong duality. The following defines the duality properties.

\begin{definition}\label{def:duality}
Consider a primal-dual pair of the Linear Program~\eqref{eq:lp-primal}, \eqref{eq:lp-dual}. Then we define the following,
\begin{enumerate}
	\item A vector $w\in\RR^n$ (resp.~$(\eta,\theta)\in\RR^{m_1}\times\RR^{m_2}$) is said to be a \textit{feasible solution} of \eqref{eq:lp-primal} (resp.~\eqref{eq:lp-dual}) if $Aw=b$ and $Gw\le_{\mathcal K} h$ (resp.~$-A^T\eta - G^T\theta = c$ and $\theta \ge 0$), i.e., none of the constraints in~\eqref{eq:lp-primal} (resp.~\eqref{eq:lp-dual}) are violated by $w$ (resp.~$(\eta,\theta)$).
	\item We say that~\eqref{eq:lp-primal} and~\eqref{eq:lp-dual} \textit{satisfy weak duality} if for all~$w$ and all~$(\eta,\theta)$ feasible solutions of~\eqref{eq:lp-primal} and~\eqref{eq:lp-dual} respectively, 
	$$-b^T\eta -h^T\theta\le c^T w.$$
	\item If $w$ is a feasible solution of~\eqref{eq:primal} and $(\eta,\theta)$ is a feasible solution of~\eqref{eq:dual}, then the \textit{duality gap} $d$ is $$d:= c^Tw + b^T\eta +h^T\theta .$$
	\item We say that~\eqref{eq:lp-primal} and~\eqref{eq:lp-dual} \textit{satisfy strong duality} when the feasible solutions $w$ and $(\eta,\theta)$ are optimal in~\eqref{eq:lp-primal} and~\eqref{eq:lp-dual} respectively if and only if $d$ is zero.
\end{enumerate}
\end{definition} 

Weak duality always hold for a Linear Program, however, strong duality hold for a Linear Program whenever a feasible solution of~\eqref{eq:lp-primal} or~\eqref{eq:lp-dual} exists. These duality properties are used to certify the optimality of $w$ and $(\eta,\theta)$. The same concept of duality exists for Cone Programming, the \textit{primal cone problem} is
\begin{equation}\label{eq:primal}\tag{\mbox{P}}
 	\begin{aligned}
 		\text{minimize } 	& c^T w\\
 		\text{subject to } 	& A w = b \\
 					& G w \le_{\mathcal K} h,
 	\end{aligned}
\end{equation}
over variable $w\in\RR^n$, where $A\in\RR^{m_1\times n}, G\in\RR^{m_2\times n}, c\in\RR^n,b\in\RR^{m_1},$ and $h\in\RR^{m_2}$. The \textit{dual cone problem} is,
\begin{equation}\label{eq:dual}\tag{\mbox{D}}
 	 	\begin{aligned}
 	 		\text{maximize } 	& -b^T\eta - h^T\theta\\
 	 		\text{subject to } 	& -A^T\eta - G^T\theta = c\\
 	 					& \theta \ge_{\mathcal K^*} 0,
 	 	\end{aligned}
\end{equation}
where $\mathcal K^*:=\{u\in\RR^n\mid u^Tv \ge 0~\text{for all}~v\in\mathcal K\}$ is the \textit{dual cone} of $\mathcal K$. The entries of the vector $\eta\in\RR^{m_1}$ are called the \textit{dual variables for equality constraints},~$Aw=b$. Those of $\theta\in\RR^{m_2}$ are the \textit{dual variables for generalized inequalities},~$Gw \le_{\mathcal K} h$. The primal-dual pair of a Conic Optimization~\eqref{eq:primal} and~\eqref{eq:dual} satisfy weak and strong duality in the same manner as the Linear Programming pair. In what follows, we will define the interior point of a Cone Program and then state when weak and strong duality (see Definition~\ref{def:duality}) hold for the Conic Programming pair.

\begin{definition}\label{def:int-pt}
	Consider a primal-dual pair of the Conic Optimization~\eqref{eq:primal}, \eqref{eq:dual}. Then the primal problem~\eqref{eq:primal} has an interior point $\tilde{x}$ if,
		\begin{itemize}
			\item $\tilde{x}$ is a feasible solution of~\eqref{eq:primal}.
			\item There exists $\epsilon>0$ such that for any $y\in\RR^n$, we have $y\in\mathcal K$  whenever $\norm{h - G\tilde{x} - y}_2\le\epsilon.$
		\end{itemize}
\end{definition}
\begin{theorem}[Theorem 4.7.1~\cite{gartner2012approximation}]\label{thm:dual}
		Consider a primal-dual pair of the Conic Optimization~\eqref{eq:primal}, \eqref{eq:dual}. Let $w$ and $(\eta,\theta)$ be the feasible solutions of~\eqref{eq:primal} and~\eqref{eq:dual} respectively. Then,
		\begin{enumerate}
			\item Weak duality always hold for~\eqref{eq:primal} and~\eqref{eq:dual}.
			\item If $c^Tw$ is finite and~\eqref{eq:primal} has an interior point $\tilde{w}$, then strong duality holds for~\eqref{eq:primal} and~\eqref{eq:dual}.
		\end{enumerate}
\end{theorem}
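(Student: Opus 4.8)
This part I would dispatch with a single computation. Given primal-feasible $w$ and dual-feasible $(\eta,\theta)$, substitute the dual equality $-A^T\eta-G^T\theta=c$ into the primal objective:
\[
 c^Tw = -\eta^T(Aw)-\theta^T(Gw) = -b^T\eta-\theta^TGw,
\]
using $Aw=b$. Hence the duality gap of Definition~\ref{def:duality} is
\[
 d = c^Tw+b^T\eta+h^T\theta = \theta^T(h-Gw).
\]
Since $h-Gw\in\mathcal K$ (primal feasibility) and $\theta\in\mathcal K^*$ (dual feasibility), the defining property of the dual cone gives $\theta^T(h-Gw)\ge0$, i.e.\ $d\ge0$. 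This is weak duality, and it already settles one direction of the strong-duality equivalence: if $d=0$ then $c^Tw=-b^T\eta-h^T\theta$, so weak duality applied to every other feasible pair certifies that $w$ and $(\eta,\theta)$ are optimal.

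\textbf{Reduction for strong duality (part 2).} For the remaining direction it suffices to establish a \emph{zero gap at optimality}: that the common optimal value is attained on the dual side. Indeed, if $\gamma$ denotes the optimal value of~\eqref{eq:primal} (finite by hypothesis), and some dual-feasible point attains $-b^T\eta-h^T\theta=\gamma$, then any optimal $w$ and any optimal $(\eta,\theta)$ satisfy $d=c^Tw-(-b^T\eta-h^T\theta)=\gamma-\gamma=0$. To run the argument cleanly I would stack the constraints, forming $\bar A\in\RR^{(m_1+m_2)\times n}$ ($A$ over $G$) and $\bar b\in\RR^{m_1+m_2}$ ($b$ over $h$), so that the primal constraint reads $\bar b-\bar Aw\in\mathcal C$ with $\mathcal C:=\{0\}^{m_1}\times\mathcal K$; the dual cone is $\mathcal C^*=\RR^{m_1}\times\mathcal K^*$, and~\eqref{eq:dual} is reproduced by $\mu:=(\eta,\theta)$.

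\textbf{The separation argument.} The engine of the proof is a supporting hyperplane. I would form the convex set
\[
 \mathcal S:=\bigl\{(\bar Aw-\bar b+q,\;c^Tw+\lambda): w\in\RR^n,\ q\in\mathcal C,\ \lambda\ge0\bigr\},
\]
and observe that $(0,v)\in\mathcal S$ precisely when some feasible $w$ has $c^Tw\le v$; thus $(0,\gamma)$ is a boundary point of $\mathcal S$. A hyperplane supporting $\mathcal S$ there yields $(\mu,r)\ne0$ with $\mu^Tz+rv\ge r\gamma$ on $\mathcal S$. Driving $\lambda\to\infty$ forces $r\ge0$; scaling $q$ through the cone forces $\mu\in\mathcal C^*$, i.e.\ $\theta\in\mathcal K^*$. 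In the good case $r>0$ I would normalize $r=1$ and set $q=0,\lambda=0$, turning the inequality into $(\bar A^T\mu+c)^Tw\ge\gamma+\mu^T\bar b$ for all $w$; a linear function bounded below must be constant, so $\bar A^T\mu=-c$ (dual feasibility) and $-\bar b^T\mu\ge\gamma$. With weak duality this gives $-b^T\eta-h^T\theta=\gamma$, the desired attained dual optimum.

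\textbf{The main obstacle.} Everything hinges on excluding $r=0$, and this is exactly where Definition~\ref{def:int-pt} enters. If $r=0$ then $\mu=(\eta,\theta)\ne0$ satisfies $\bar A^T\mu=0$ and $\mu^T\bar b\le0$, whence $\theta^T(h-Gw)\le0$ for every feasible $w$; evaluating at the interior point $\tilde w$, where $h-G\tilde w\in\mathrm{int}\,\mathcal K$, together with the fact that a nonzero element of $\mathcal K^*$ is strictly positive on $\mathrm{int}\,\mathcal K$, forces $\theta=0$. The genuine difficulty is that $\theta=0$ leaves $\eta\ne0$ with $A^T\eta=0$ still possible, which cone-interiority cannot exclude because the equality block $\{0\}^{m_1}$ has empty interior. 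The fix I would commit to is to remove the equalities before separating: parametrize the (nonempty) affine set $\{w:Aw=b\}$ as $w_0+F\xi$ with $\operatorname{range}F=\ker A$, and apply the separation to the resulting purely conic program in $\xi$. There $r=0$ is contradicted outright, and the recovered cone multiplier $\theta$ lifts to a full $(\eta,\theta)$ because $F^T(c+G^T\theta)=0$ puts $c+G^T\theta$ in $\operatorname{range}A^T$. Getting this elimination-and-lift exactly right, rather than any individual inequality, is the step I expect to demand the most care.
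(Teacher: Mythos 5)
The paper contains no proof of this statement: it is imported verbatim as Theorem~4.7.1 of G\"artner--Matou\v{s}ek~\cite{gartner2012approximation}, so there is no in-paper argument to compare against. Your proposal is essentially the standard proof found in that reference: weak duality by the one-line computation $d=\theta^T(h-Gw)\ge0$ (which, as you note, also gives the ``$d=0\Rightarrow$ optimal'' direction of the paper's Definition~\ref{def:duality}(4)), and strong duality by a supporting hyperplane to the set $\mathcal S$ at $(0,\gamma)$, with the interior point used to exclude a horizontal ($r=0$) supporting hyperplane. You also correctly isolate the one genuine subtlety: interiority of the cone constraint alone cannot rule out a degenerate multiplier supported on the equality block when $A$ fails to have full row rank, since $\{0\}^{m_1}$ has empty interior. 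Your fix --- parametrize $\{w: Aw=b\}=w_0+F\xi$, separate in the reduced purely conic program, then lift the multiplier using $c+G^T\theta\in(\ker A)^{\perp}=\operatorname{range}A^T$ --- is the right one (the alternative, assuming $A$ surjective, is what many textbooks do silently). The only steps left implicit are routine: $(0,\gamma)\notin\operatorname{int}\mathcal S$ because $\gamma$ is the optimal value, and a nonzero $\theta\in\mathcal K^*$ is strictly positive on $\operatorname{int}\mathcal K$ (otherwise $\theta^T(x-\epsilon\theta)<0$ for small $\epsilon>0$ and $x\in\operatorname{int}\mathcal K$ with $\theta^Tx=0$). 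I consider the proposal correct.
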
  
If the requirements of Theorem~\ref{thm:dual} are met for a conic optimization problem, then weak and strong duality can be used as guarantees that the given solution of a Cone Program is optimal. One of the closed convex cones which we will use throughout the paper is the \textit{exponential cone}, $\mathcal K_{\exp}$, defined in~\cite{chares2009cones} as
\begin{equation}\label{eq:expcone}
	\{(r,p,q)\in\RR^{3}\mid q > 0 \text{ and } q e^{r/q} \le p\}\cup\{(r,p,0)\in\RR^3\mid r\le 0 \text{ and } p\ge 0\},
\end{equation}
which is the closure of the set
\begin{equation}\label{eq:open-expcone}
	\{(r,p,q)\in\RR^{3}\mid q > 0 \text{ and } q e^{r/q} \le p\},
\end{equation}
and its dual cone, $\mathcal K^*_{\exp},$ is 
\begin{equation}\label{eq:dual-expcone}
	\{(u,v,w)\in\RR^{3}\mid u<0\text{ and } -u\cdot e^{w/u}\le e\cdot v\}\cup\{(0,v,w)\mid v\ge0\text{ and } w\ge0\},
\end{equation}
which is the closure of the set
\begin{equation}\label{eq:open-dual-expcone}
	\{(u,v,w)\in\RR^{3}\mid u<0\text{ and } -u\cdot e^{w/u}\le e\cdot v\}.
\end{equation}
When $\mathcal K = \mathcal K_{\exp}$ in~\eqref{eq:primal} then the Cone Program is referred to as ``Exponential Cone Program''. 

\begin{figure}[H]
	\captionsetup[subfigure]{justification=centering}
	\centering
	\hspace*{\fill}%
	\subfigure[$\mathcal K_{\exp}$ for $ -2\le r\le 0$ and \hspace*{0.5cm}  $0\le q,p \le 2.$]{\includegraphics[height=4.96cm]{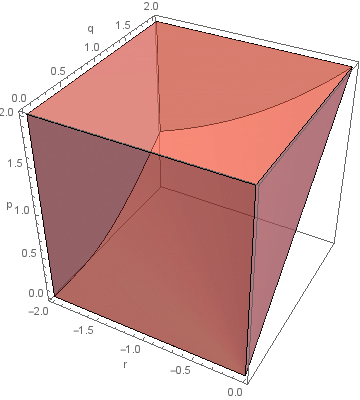}}\hfill
	\subfigure[$\mathcal{K^*_{\exp}}$ for $ -2\le u\le 0$ and \hspace*{0.5cm} $0\le w,v \le 2.$]{\includegraphics[height=4.96cm]{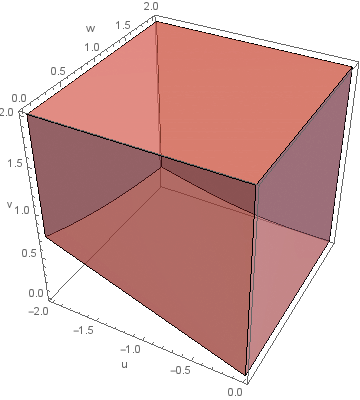}}
	\hspace*{\fill}%
	\caption{The $\mathcal K_{\exp}$ cone and its dual.}
\end{figure}
\subsection{The Exponential Cone Programming Model}
The Convex Program~\eqref{eq:the-BROJA-CP} which computes the bivariate partial information decomposition can be formulated as an Exponential Cone Program. Consider the following Exponential Cone Program where the variables are $r,p,q \in \RR^{\mathbf{X}\times\mathbf{Y}\times\mathbf{Z}}$.

\begin{equation}\label{eq:BROJA-ExpCone}\tag{\mbox{EXP}}
 \begin{aligned}
	  \text{minimize }       & -\sum_{x,y,z} r_{x,y,z}\\
	  \text{subject to}\quad & q_{x,y,*}             = b^\y_{x,y}  								&&\text{ for all $(x,y)\in \mathbf{X}\times\mathbf{Y}$}\\
	  {}                     & q_{x,*,z}             = b^\z_{x,z}  								&&\text{ for all $(x,z)\in \mathbf{X}\times\mathbf{Z}$}\\
	  {}                     & q_{*,y,z} - p_{x,y,z} =  0										&&\text{ for all $(x,y,z)\in \mathbf{X}\times \mathbf{Y}\times\mathbf{Z}$}\\
	  {}                     & (-r_{x,y,z},-p_{x,y,z},-q_{x,y,z}) \le_{\mathcal K_{\exp}} 0 	&&\text{ for all $(x,y,z)\in \mathbf{X}\times\mathbf{Y}\times\mathbf{Z}$.}\\
 \end{aligned}
\end{equation}

The first two types of constraints are the \textit{marginal equations} of~\eqref{eq:the-BROJA-CP}. The third type of constraints connects the $p$-variables with the $q$-variables which will be denoted as \textit{coupling equations}. The generalized inequality connects these to the variables forming the objective function.
\begin{proposition}
	The exponential cone program~\eqref{eq:BROJA-ExpCone} is equivalent to the Convex Program~\eqref{eq:the-BROJA-CP}.	
\end{proposition}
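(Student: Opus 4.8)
The plan is to exhibit an explicit correspondence between feasible solutions of the two programs, showing that the $q$-variables play the same role in both and that, after optimizing out the auxiliary variables $r$ and $p$, the two objectives coincide. Concretely, I would establish two facts: (i) a vector $q\in\RR^{\mathbf X\times\mathbf Y\times\mathbf Z}$ is feasible for~\eqref{eq:the-BROJA-CP} if and only if it is the $q$-component of some feasible point $(r,p,q)$ of~\eqref{eq:BROJA-ExpCone}; and (ii) for each such $q$, the minimum of the objective $-\sum_{x,y,z} r_{x,y,z}$ over all $(r,p)$ completing $q$ to a feasible point of~\eqref{eq:BROJA-ExpCone} equals the objective $\sum_{x,y,z} q_{x,y,z}\ln(q_{x,y,z}/q_{*,y,z})$ of~\eqref{eq:the-BROJA-CP} at $q$. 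Together these yield equality of the optimal values and a bijection between optimizers (up to the choice of the auxiliary variables).

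For the feasibility equivalence (i), I would note that the marginal equations are literally identical in both programs, and the coupling equation $q_{*,y,z}-p_{x,y,z}=0$ merely defines $p_{x,y,z}:=q_{*,y,z}$, which is well-defined for any $q$. Unwinding the generalized inequality via $a\le_{\mathcal K} b \iff b-a\in\mathcal K$ shows that $(-r_{x,y,z},-p_{x,y,z},-q_{x,y,z})\le_{\mathcal K_{\exp}} 0$ is equivalent to $(r_{x,y,z},p_{x,y,z},q_{x,y,z})\in\mathcal K_{\exp}$. Consulting the definition~\eqref{eq:expcone}, membership forces $q_{x,y,z}\ge 0$, and conversely any $q_{x,y,z}\ge 0$ can be completed (with $p_{x,y,z}=q_{*,y,z}$ and a suitable $r_{x,y,z}$) to satisfy the cone constraint. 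Hence the projection of the feasible region of~\eqref{eq:BROJA-ExpCone} onto the $q$-coordinates is exactly the feasible region of~\eqref{eq:the-BROJA-CP}.

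For (ii) I would analyze the cone constraint pointwise, substituting $p_{x,y,z}=q_{*,y,z}$. When $q_{x,y,z}>0$, the first branch of~\eqref{eq:expcone} gives $q_{x,y,z}e^{r_{x,y,z}/q_{x,y,z}}\le q_{*,y,z}$, equivalently $r_{x,y,z}\le -q_{x,y,z}\ln(q_{x,y,z}/q_{*,y,z})$; when $q_{x,y,z}=0$, the second (boundary) branch gives $r_{x,y,z}\le 0$, matching the convention $0\ln 0=0$ in the objective of~\eqref{eq:the-BROJA-CP}. In either case $r_{x,y,z}\le -q_{x,y,z}\ln(q_{x,y,z}/q_{*,y,z})$, and since the cone is closed this bound is attained by a feasible $r$. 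As~\eqref{eq:BROJA-ExpCone} minimizes $-\sum_{x,y,z} r_{x,y,z}$, each $r_{x,y,z}$ is driven to its upper bound at optimality, so $-\sum_{x,y,z} r_{x,y,z}=\sum_{x,y,z} q_{x,y,z}\ln(q_{x,y,z}/q_{*,y,z})$, which is precisely the objective of~\eqref{eq:the-BROJA-CP}.

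The main obstacle is the careful handling of the boundary of $\mathcal K_{\exp}$, namely the points with $q_{x,y,z}=0$: one must invoke the closedness of the exponential cone so that the optimal choice $r_{x,y,z}=0$ is genuinely feasible, and verify that the entropy convention $0\ln 0=0$ is exactly the continuous extension forced by the boundary piece $\{(r,p,0)\mid r\le 0,\ p\ge 0\}$. One should also rule out the degenerate pairing $q_{x,y,z}>0$ with $q_{*,y,z}=0$, which cannot occur because $q_{*,y,z}\ge q_{x,y,z}$ whenever all $q\ge 0$. Modulo these boundary checks the correspondence is a direct substitution, so the claimed equivalence of~\eqref{eq:BROJA-ExpCone} and~\eqref{eq:the-BROJA-CP} — equal optimal values, with optimizers of one yielding optimizers of the other — follows at once.
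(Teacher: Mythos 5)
Your proposal is correct and takes essentially the same route as the paper: the paper's explicit map $f(q_{x,y,z})=(q_{x,y,z}\ln\frac{q_{*,y,z}}{q_{x,y,z}},\,q_{*,y,z},\,q_{x,y,z})$ (with the $0\ln 0=0$ convention) is exactly your optimal completion of $q$ by $(r,p)$, and the paper's observation that any feasible point outside the image of $f$ has strictly larger objective is your step (ii). Your write-up is somewhat more careful about the surjectivity of the projection onto the $q$-coordinates and the boundary branch of $\mathcal K_{\exp}$, which the paper leaves implicit, but the argument is the same.
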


\begin{proof}
	Let $\P_{\CP}(b)$ and $\P_{\exp}(b)$ be the feasible region of~\eqref{eq:the-BROJA-CP} and~\eqref{eq:BROJA-ExpCone} respectively. We define the following
	\begin{equation}
		\begin{aligned}
			f: \P_{\CP}(b)	&\to 	&&\P_{\exp}(b)\\
				q_{x,y,z} 	&\to	&& 
									f(q_{x,y,z}):=\begin{cases}
										(q_{x,y,z}\ln\frac{q_{*,y,z}}{q_{x,y,z}}, q_{*,y,z}, q_{x,y,z}) &\text{if $q_{xyz}>0$}\\
										(0,q_{*,y,z}, q_{x,y,z})										&\text{if $q_{x,y,z}=0$.}
									\end{cases}
		\end{aligned}
	\end{equation}
	For $q_{x,y,z}\in\P_{\CP}$, we have
	\begin{equation*}
		(-1,0,0)^T\cdot f(q_{x,y,z}) =	\begin{cases}
											q_{x,y,z}\ln\frac{q_{x,y,z}}{q_{*,y,z}} &\text{if $q_{xyz}>0$}\\
											0										&\text{if $q_{xyz}=0$}\\
										\end{cases}  
	\end{equation*} 
	and since conditional entropy at $q_{x,y,z}=0$ vanishes, then the objective function of~\eqref{eq:the-BROJA-CP} evaluated at $q\in\P_{\CP}$ is equal to that of~\eqref{eq:BROJA-ExpCone} evaluated at $f(q)$. If $(r,p,q)\in\P_{\exp}\backslash\Ima(f)$, then there exists $x,y,z$ such that $r_{x,y,z}<q_{x,y,z}\ln\frac{p_{x,y,z}}{q_{x,y,z}}$ and so
	$$-\sum_{x,y,z} r_{x,y,z} >\sum_{x,y,z} q_{x,y,z}\ln\frac{q_{x,y,z}}{p_{x,y,z}}.$$
\end{proof}
The dual problem of~\eqref{eq:BROJA-ExpCone} is 
\begin{subequations}
	\begin{align}
		\text{maximize }		& -\sum_{x,y}\lambda_{x,y}b^\y_{x,y} -\sum_{x,z}\lambda_{x,z}b^\z_{x,z} \notag\\
		\text{subject to}\quad	& 														- \nu_{x,y,z}^1	= 1	&&\text{ for all $(x,y,z)\in \mathbf{X}\times\mathbf{Y}\times\mathbf{Z}$}\label{eq:DExp-1}\\
		{}						&		\mu_{x,y,z} 									- \nu_{x,y,z}^2 = 0	&&\text{ for all $(x,y,z)\in \mathbf{X}\times\mathbf{Y}\times\mathbf{Z}$}\label{eq:DExp-2}\\
		{}						& 	-	\mu_{*,y,z}	-	\lambda_{x,y} -	\lambda_{x,z}	- \nu_{x,y,z}^3 = 0	&&\text{ for all $(x,y,z)\in \mathbf{X}\times\mathbf{Y}\times\mathbf{Z}$}\label{eq:DExp-3}\\
		{}						& (\nu_{x,y,z}^1,\nu_{x,y,z}^2,\nu_{x,y,z}^3)	\ge_{\mathcal K^*_{\exp}} 0	&&\text{ for all $(x,y,z)\in \mathbf{X}\times\mathbf{Y}\times\mathbf{Z}$}\label{eq:DExp-4}
	\end{align}
\end{subequations}	
Using the definition of $\mathcal K_{\exp}^*$ the system consisting of~\eqref{eq:DExp-1},~\eqref{eq:DExp-2},~\eqref{eq:DExp-3}, and~\eqref{eq:DExp-4} is equivalent to 
\begin{equation*}
		\lambda_{x,y} + \lambda_{x,z} + \mu_{*,y,z} + 1 +\ln(-\mu_{x,y,z}) \ge 0\mskip20mu\text{ for all $(x,y,z)\in\mathbf{X}\times\mathbf{Y}\times\mathbf{Z}$}
\end{equation*}
and so the dual problem of~\eqref{eq:BROJA-ExpCone} can be formulated as 
\begin{equation}\label{eq:BROJA-DExpCone}\tag{\mbox{D-EXP}}
	\begin{aligned}
		\text{maximize }		& -	\sum_{x,y}\lambda_{x,y}b^\y_{x,y} -	\sum_{x,z}\lambda_{x,z}b^\z_{x,z} \\
		\text{subject to}\quad
								& \lambda_{x,y}	+	\lambda_{x,z}	+	\mu_{*,y,z}	+ 1	+	\ln(-\mu_{x,y,z}) \ge 0	&&\text{ for all $(x,y,z)\in\mathbf{X}\times\mathbf{Y}\times\mathbf{Z}$}
	\end{aligned}
\end{equation}
\begin{proposition}\label{prop:int-point}
 Strong duality holds for the primal-dual pair~\eqref{eq:BROJA-ExpCone},~\eqref{eq:BROJA-DExpCone}.
\end{proposition}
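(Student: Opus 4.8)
The plan is to obtain the result as a direct application of Theorem~\ref{thm:dual}(2) to the pair \eqref{eq:BROJA-ExpCone}, \eqref{eq:BROJA-DExpCone}. That theorem reduces strong duality to two checks: that the optimal value of the primal \eqref{eq:BROJA-ExpCone} is finite, and that \eqref{eq:BROJA-ExpCone} admits an interior point in the sense of Definition~\ref{def:int-pt}. Weak duality is handed to us by the first part of the same theorem, so I would not dwell on it.

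Finiteness is the quick step. Rewriting the generalized inequality $(-r,-p,-q)\le_{\mathcal K_{\exp}}0$ as $(r_{x,y,z},p_{x,y,z},q_{x,y,z})\in\mathcal K_{\exp}$ and using the coupling equations $p_{x,y,z}=q_{*,y,z}$, the cone constraint forces $r_{x,y,z}\le q_{x,y,z}\ln(q_{*,y,z}/q_{x,y,z})$. Hence $-\sum_{x,y,z} r_{x,y,z}\ge\sum_{x,y,z} q_{x,y,z}\ln(q_{x,y,z}/q_{*,y,z})$, and each summand on the right lies in $[-1/e,0]$ under the convention $0\ln 0=0$ (minimizing $t\mapsto t\ln(t/s)$ over $0\le t\le s\le 1$ gives $-s/e$). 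Together with the nonemptiness established below, this shows the primal optimum is finite.

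The substance is the construction of the interior point. By Definition~\ref{def:int-pt} I must exhibit a feasible $(\tilde r,\tilde p,\tilde q)$ whose slack has each component triple $(\tilde r_{x,y,z},\tilde p_{x,y,z},\tilde q_{x,y,z})$ in the interior of $\mathcal K_{\exp}$; since the constraint cone is the product $\prod_{x,y,z}\mathcal K_{\exp}$ and the interior of a product of cones is the product of interiors, an $\mathcal K$-ball of common radius $\epsilon$ then fits, as required. From \eqref{eq:open-expcone} one reads off $\operatorname{int}\mathcal K_{\exp}=\{(r,p,q):q>0,\ q e^{r/q}<p\}$. I would take $\tilde q$ to be the conditional-independence coupling $\tilde q_{x,y,z}:=b^\y_{x,y}\,b^\z_{x,z}/b_x$, where $b_x:=\sum_y b^\y_{x,y}=\sum_z b^\z_{x,z}$; a short computation shows $\tilde q$ satisfies both families of marginal equations. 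I then set $\tilde p_{x,y,z}:=\tilde q_{*,y,z}$ to meet the coupling equations, and choose $\tilde r_{x,y,z}$ strictly below $\tilde q_{x,y,z}\ln(\tilde q_{*,y,z}/\tilde q_{x,y,z})$, so that $\tilde q_{x,y,z}\,e^{\tilde r_{x,y,z}/\tilde q_{x,y,z}}<\tilde p_{x,y,z}$ and the slack lands in $\operatorname{int}\mathcal K_{\exp}$.

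The main obstacle is the strict positivity $\tilde q_{x,y,z}>0$ that the interior of $\mathcal K_{\exp}$ demands: the coupling $\tilde q$ vanishes precisely on triples with $b^\y_{x,y}=0$ or $b^\z_{x,z}=0$ (and $x$ with $b_x=0$ must be dropped for the quotient to make sense). The resolution is that every feasible $q$ is forced to vanish there too, since $q_{x,y,*}=b^\y_{x,y}=0$ with $q\ge 0$ kills the whole fibre, and likewise for $b^\z$; these coordinates are therefore not genuine degrees of freedom. I would thus phrase the entire cone program over the support $S:=\{(x,y,z): b^\y_{x,y}>0\text{ and }b^\z_{x,z}>0\}$ (with $b_x>0$), on which $\tilde q$ is strictly positive, so that the construction above produces a genuine interior point and Theorem~\ref{thm:dual}(2) yields strong duality.
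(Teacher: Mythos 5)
Your proof is correct and follows essentially the same route as the paper: both apply Theorem~\ref{thm:dual} after exhibiting the interior point $\tilde q_{x,y,z}=b^\y_{x,y}b^\z_{x,z}/b^\y_{x,*}$, $\tilde p_{x,y,z}=\tilde q_{*,y,z}$, with $\tilde r_{x,y,z}$ pushed strictly below $\tilde q_{x,y,z}\ln\bigl(\tilde p_{x,y,z}/\tilde q_{x,y,z}\bigr)$. The only differences are ones of detail: the paper assumes $b^\y_{x,y},b^\z_{x,z}>0$ and defers the verification that this point is interior to~\cite{makkeh:phd:2018}, whereas you carry out that verification explicitly, note the finiteness of the optimal value, and handle vanishing marginals by restricting to the support.
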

\begin{proof} We assume that $b^\y_{x,y},b^\z_{x,z}>0.$ Consider the point $\tilde{t}$ with $\tilde{t}_{x,y,z} = (\tilde{r}_{x,y,z},\tilde{p}_{x,y,z},\tilde{q}_{x,y,z})$ such that  
	\begin{equation}
	\begin{aligned}
		\tilde{r}_{x,y,z} &:= \tilde{q}_{x,y,z}\log\frac{\tilde{p}_{x,y,z}}{\tilde{q}_{x,y,z}} - 100\\
		\tilde{p}_{x,y,z} &:= \tilde{q}_{*,y,z} \\
		\tilde{q}_{x,y,z} &:= \frac{b^\y_{x,y}\cdot b^\z_{x,z}}{b^\y_{x,*}}.
	\end{aligned}
	\end{equation}
	$\tilde{t}$ is an interior point of~\eqref{eq:BROJA-ExpCone}, we refer to~\cite{makkeh:phd:2018} for the proof. Hence by Theorem~\ref{thm:dual}, strong duality holds for the primal-dual pair~\eqref{eq:BROJA-ExpCone},~\eqref{eq:BROJA-DExpCone}. 

\end{proof}
 Weak and strong duality in their turn will provide the user a measure for the quality of the returned solution, for more details see Section~\ref{subsec:output}.

\section{The BROJA\_2PID Estimator}\label{sec:imp}We implemented the exponential cone program~\eqref{eq:BROJA-ExpCone} into Python and used a conic optimization solver to get the desired solution. Note that we are aware of only two conic optimization software toolboxes which allow to solve Exponential Cone Programs, ECOS and SCS. The current version of \textsc{Broja\_2pid} utilities ECOS to solve the Exponential Cone Program~\eqref{eq:BROJA-ExpCone}. ECOS\footnote{We use the version from Nov~8, 2016.} is a lightweight numerical software for solving Convex Cone programs~\cite{Domahidi-Chu-Boyd:ECOS:13}. 

This section describes the \textsc{Broja\_2pid} package form the user's perspective.  We briefly explain how to install \textsc{Broja\_2pid}. Then we illustrate the framework of \textsc{Broja\_2pid} and its functions. Further, we describe the input, tuning parameters, and output.
\subsection{Installation}
To install \textsc{Broja\_2pid} you need Python to be installed on your machine. Currently you need to install ECOS, the Exponential Cone solver. To do that, you most likely\shellinline{pip3 install ecos}. In case of having troubles installing ECOS we refer to its Github repository~\url{https://github.com/embotech/ecos-python}. Finally you need to\shellinline{gitclone} the Github link of~\textsc{Broja\_2pid} and it is ready to be used.
\subsection{Computing Bivariate PID}
	In this subsection, we will explain how \textsc{Broja\_2pid} works. In Figure~\ref{fig:and-gate}, we present a script as an example of using \textsc{Broja\_2pid} package to compute the partial information decomposition of the \textsc{And} distribution, $X = Y\operatorname{AND}Z$ where $Y$ and $Z$ are independent and uniformly distributed in $\{0,1\}$.
	\begin{figure}[H]
		\centering
		\scalebox{0.7}[0.7]{
			\pythonexternal{figures/test_and_gate.py}
			}
		\caption{Computing the partial information decomposition of the \textsc{And} gate using \textsc{Broja\_2pid}.} \label{fig:and-gate}
	\end{figure} 
	
	We will go through the example (Figure~\ref{fig:and-gate}) to explain how \textsc{Broja\_2pid} works. The main function in \textsc{Broja\_2pid} package is\pythoninline{pid()}. It is a wrap up function which is used to compute the partial information decomposition. First,\pythoninline{pid()} prepares the ``ingredients'' of~\eqref{eq:BROJA-ExpCone}. Then it calls the Cone Programming solver to find the optimal solution of~\eqref{eq:BROJA-ExpCone}. Finally, it receives from the Cone Programming solver the required solution to compute the decomposition. 
	
	The ``ingredients'' of~\eqref{eq:BROJA-ExpCone} are the marginal and coupling equations, generalized inequalities, and the objective function. So,\pythoninline{pid()} needs to compute and store $b^\y_{x,y}$ and $b^\z_{x,z}$, the marginal distributions of $(X,Y)$ and $(X,Z)$. For this,\pythoninline{pid()} requires a distribution of $X,Y,$ and $Z$. In Figure~\ref{fig:and-gate}, the distribution comes from the \textsc{And} gate where $X = Y\operatorname{AND}Z.$ 
	
	The distribution must always be defined as a mapping (Python dictionary) of $(x,y,z)$, \textit{triplet}, to its respective probability, \textit{number}. E.g., the triplet $(0,0,0)$ occurs with probability $\nicefrac{1}{4}$ and so on for the other triplets. So \textsc{And} distribution is defined as a Python dictionary,\pythoninline{andgate=dict()} where\pythoninline{andgate[ (0,0,0) ]=.25} is assigning the key ``$(0,0,0)$'' a value ``0.25'' and so on.
	
	Note that the user does not have to add the triplets with zero probability to the dictionary since\pythoninline{pid()} will always discard such triplets. In~\cite{makkeh2017bivariate}, the authors discussed in details how to handle the triplets with zero probability. The input of\pythoninline{pid()} is explained in details in the following subsection.
	
	Now we briefly describe how\pythoninline{pid()} proceeds to return the promised decomposition. \pythoninline{pid()} calls the Cone Programming solver and provides it with the ``ingredients'' of~\eqref{eq:BROJA-ExpCone} as a part of the solver's input. The solver finds the optimal solution of~\eqref{eq:BROJA-ExpCone} and~\eqref{eq:BROJA-DExpCone}. When the solver halts it returns the primal and dual solutions. Using the returned solutions,\pythoninline{pid()} computes the decomposition based on equation~\eqref{eq:broja-short-def}.  The full process is explained in Figure~\ref{fig:flow-chart}.
	
	Finally,\pythoninline{pid()} returns a Python dictionary,\pythoninline{returndata} containing the partial information decomposition and information about the quality of the Cone Programming solver's solution. In Subsection~\ref{subsec:output} we give a detailed explanation on how to compute the quality's data and Table~\ref{tab:output} contains a description of the keys and values of\pythoninline{returndata}.

	E.g., in the returned dictionary\pythoninline{returndata} for the \textsc{And} gate,\pythoninline{returndata['CI']} contains the quantity of synergistic information and\pythoninline{returndata['Num_err'][0]} the maximum primal feasibility violation of~\eqref{eq:BROJA-ExpCone}. 
	
	Note that conic optimization solver is always supposed to return a solution. So, \textsc{Broja\_2pid} will raise an exception,\pythoninline{BROJA\_2PID\_Exception}, when no solution is returned. 
%
%
%

%
	\begin{figure}
		\centering
		\includegraphics[height=8cm, width=15cm]{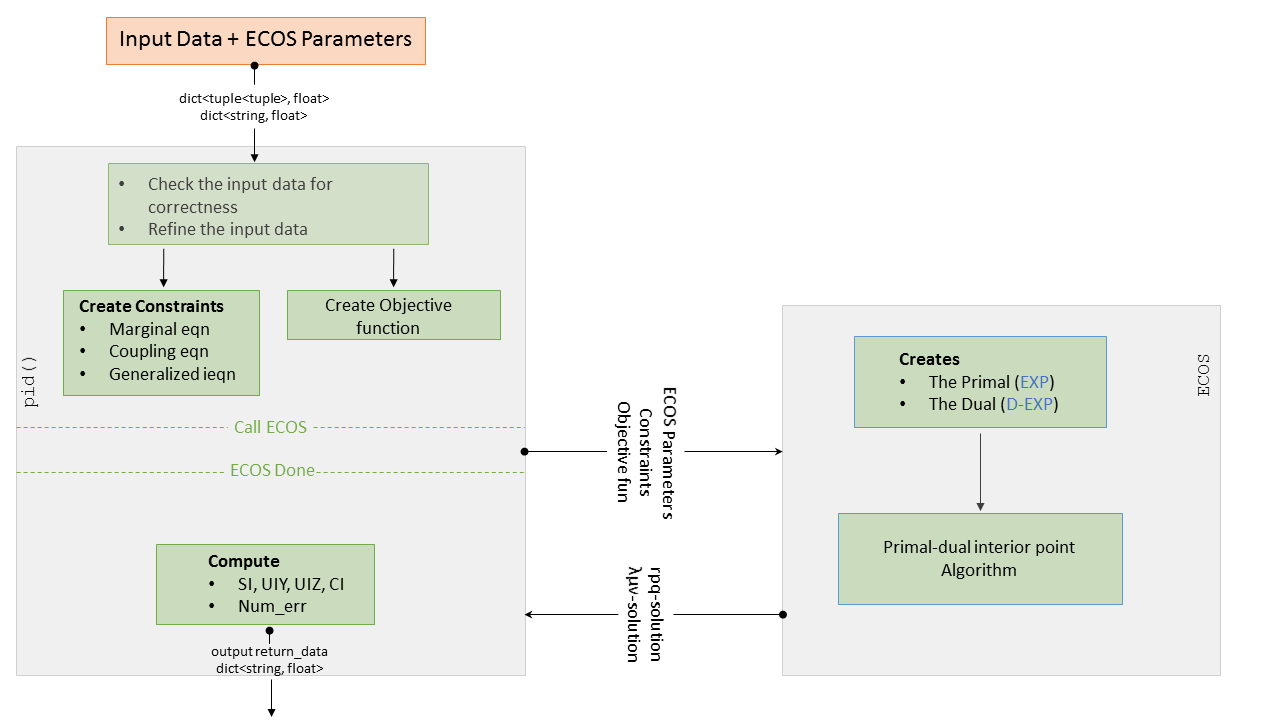}
		\caption{\textsc{Broja\_2pid} workflow. Left is the flow in \texttt{pid()}. Right is the flow in ECOS. The arrows with oval tail indicate passing of data whereas the ones with line tail indicate time flow.}
		\label{fig:flow-chart}
	\end{figure}     
\subsection{Input and Parameters}\label{subsec:input}
	In \textsc{Broja\_2pid} package,\pythoninline{pid()} is the function which the user needs to compute the partial information decomposition. The function\pythoninline{pid()} takes as input a Python dictionary.
	
	The Python dictionary will represent a probability distribution. This distribution computes the vectors $b^\y_{x,y}$ and $b^\z_{x,z}$ for the  the marginal equations in \eqref{eq:BROJA-ExpCone}. A key of the Python dictionary is a \textit{triplet} of $(x,y,z)$ which is a possible outcome of the random variables $X,Y,$ and~$Z$. A value of the key~$(x,y,z)$ in the Python dictionary is a \textit{number} which is the probability of $X=x,Y=y,$ and $Z=z$. 
	
	\paragraph{Solver parameters.} The Cone Programming solver has to make sure while seeking the optimal solution of~\eqref{eq:BROJA-ExpCone} that $w$ and $(\eta, \theta)$ are feasible and (ideally) should halt when the duality gap is zero, i.e., $w$ and $(\eta, \theta)$ are optimal. But  $w$ and $(\eta, \theta)$ entries belong to $\RR$ and computers represent real numbers up to floating precision. So the Cone Programming solver consider a solution feasible, none of the constraints are violated, or optimal, duality gap is zero, up to a numerical precision (tolerance). The Cone Programming solver allows the user to modify the feasibility and optimality tolerances along with couple other parameters which are described in Table~\ref{tab:ecos}. 
	\begin{table}[H]
		\centering
		\scalebox{0.9}[0.9]{
		\begin{tabular}{llc}
		\toprule
		Parameter 				& Description 												& Recommended Value \\
		\midrule
		\texttt{feastol}		& 	primal/dual feasibility tolerance  						& $10^{-7}$         \\
		\texttt{abstol}			&	absolute tolerance on duality gap						& $10^{-6}$         \\
		\texttt{reltol}			&	relative tolerance on duality gap						& $10^{-6}$         \\
		\texttt{feastol\_inacc} &	primal/dual infeasibility \textit{relaxed} tolerance	& $10^{-3}$         \\
		\texttt{abstol\_inacc}	&	absolute \textit{relaxed} tolerance on duality gap		& $10^{-4}$         \\
		\texttt{reltol\_inacc}	&	\textit{relaxed} relative duality gap					& $10^{-4}$         \\	
		\texttt{max\_iter}		&	maximum number of iterations that ``ECOS'' does			& $100$             \\ 
		\bottomrule
		\end{tabular}%
		}%
		\caption[Parameters (tolerances) of ECOS]{Parameters (tolerances) of ECOS\footnotemark.}\label{tab:ecos}
	\end{table}
	\footnotetext{The parameters \texttt{reltol} is not recommended to be set higher. For more explanation see~\url{https://github.com/embotech/ecos}.}
	
		\begin{wrapfigure}[8]{R}{0.5\textwidth}
			\centering
		  	\pythonexternal{figures/test_parameters.py}
		    \caption{Tuning parameters} \label{fig:tune}
		\end{wrapfigure}
	In order to change the default Cone Programming solver parameters, the user should pass them to\pythoninline{pid()} as a dictionary. E.g.~in Figure~\ref{fig:tune}, we change the maximum number of iterations which the solver can do. For this we created a dictionary,\pythoninline{parms=dict()}. Then we set a desired value,\pythoninline{1000}, for the key\pythoninline{'max_iter'}. Finally, we are required to pass\pythoninline{parms} to\pythoninline{pid()} as a dictionary,\pythoninline{pid(andgate,**parms)}. Note that in the defined dictionary\pythoninline{parms}, the user only needs to define the keys for which the user wants to change the values.
	
	There is another type of parameters, namely,\pythoninline{output} which is an integer in~$\{0,1,2\}$. It is a parameter which determines the printing mode of\pythoninline{pid()}. Meaning that it allows the user to control what will be printed on the screen. Table~\ref{tab:printing-mode} gives a detailed description of the printing mode.
		\begin{table}[H]
			\centering
			\scalebox{0.9}[0.9]{
			\begin{tabular}{ll}
				\toprule
				\texttt{output}	& Description 																	\\
				\midrule
				0 (default)		& \pythoninline{pid()} prints its output (python dictionary, see Subsection~\ref{subsec:output}).						\\
				1				& In addition to\pythoninline{output= 0}, \pythoninline{pid()} prints a flags when it starts preparing~\eqref{eq:BROJA-ExpCone}\\
			     				& and another flag when it calls the conic optimization solver.															\\
				2				& In addition to\pythoninline{output= 1}, \pythoninline{pid()} prints the conic optimization solver's output\footnotemark.	\\
				\bottomrule
			\end{tabular}%
			}%
			\caption{Description of the printing mode in \texttt{pid()}.}\label{tab:printing-mode}
		\end{table}
		\footnotetext{The conic optimization solver usually prints out the problem statistics and the status of optimization.}
    Currently we are only using ECOS to solve the Exponential Cone Program but in the future we are going to add the SCS solver. For the latter reason, the user should determine which solver will be used in the computations. E.g., setting \pythoninline{cone_solver="ECOS"} will utilize ECOS in the computations. 
\subsection{Returned Data}\label{subsec:output}
	The function\pythoninline{pid()} returns a Python dictionary called\pythoninline{returndata}. Table~\ref{tab:output} describes the returned dictionary.
	\begin{table}[H]
		\centering
		\scalebox{1}[1]{
		\begin{tabular}{ll}
		\toprule
		Key 							& Value 																	\\
		\midrule
		\pythoninline{'SI'}				& Shared information, $\SI(X;Y,Z)$.\footnotemark						    \\
		\pythoninline{'UIY'}			& Unique information of $Y$, $\UI(X;Y\backslash Z)$. 						\\
		\pythoninline{'UIZ'}			& Unique information of $Z$, $\UI(X;Z\backslash Y)$.						\\
		\pythoninline{'CI'} 			& Synergistic information, $\CI(X;Y,Z)$.									\\
		\pythoninline{'Num_err'}		& information about the quality of the solution. 							\\
		\pythoninline{'Solver'}			& name of the solver used to optimize~\eqref{eq:the-BROJA-CP}\footnotemark.	\\ 
		\bottomrule
		\end{tabular}%
		} %
		\caption{Description of {\ttm returndata}, the Python dictionary returned by \texttt{pid()}.}\label{tab:output}
	\end{table}
	\footnotetext[4]{All information quantities are returned in bits.}
	\footnotetext{In this version we are only using ECOS, but other solvers might be added in the future.}
	Let $w,\eta,$ and $\theta$ be the lists returned by the Cone Programming solver where $w_{x,y,z} = [r_{x,y,z},p_{x,y,z},q_{x,y,z}],$ $\eta_{x,y,z}= [\lambda_{x,y},\lambda_{x,z}, \mu_{x,y,z}],$ and $\theta_{x,y,z} = [\nu_{x,y,z}]$. Note that $w$ is the primal solution and $(\eta,\theta)$ is the dual solution. The dictionary\pythoninline{returndata} gives the user access to the partial information decomposition, namely, shared, unique, and synergistic information. The partial information decomposition is computed using only the positive values of $q_{x,y,z}$. The value of the key\pythoninline{'Num_err'} is a triplet such that the primal feasibility violation is\pythoninline{returndata['Num_err'][0]}, the dual feasibility violation is \pythoninline{returndata['Num_err'][1]}, and\pythoninline{returndata['Num_err'][2]} is the duality gap violation. In what follows, we will explain how we compute the violations of primal and dual feasibility in addition to that of duality gap. 
	
	The primal feasibility of~\eqref{eq:BROJA-ExpCone} is
	\begin{equation}\label{eq:primal-feas}
		\begin{split}
			q_{x,y,*} = b^\y_{x,y}\\
			q_{x,*,z} = b^\z_{x,z}\\
			q_{*,y,z} = p_{x,y,z}\\
			(-r_{x,y,z},-p_{x,y,z},-q_{x,y,z}) &\le_{\mathcal K_{\exp}} 0
		\end{split}
	\end{equation}
	We check the violation of $q_{x,y,z}\ge 0$ which is required by $\mathcal K_{\exp}$. Since all the non-positive $q_{x,y,z}$ are discarded when computing the decomposition, we check if the marginal equations are violated using only the positive $q_{x,y,z}$. The coupling equations are ignored since they are just assigning values to the $p_{x,y,z}$ variables. So,\pythoninline{returndata['Num_err'][0]} (primal feasibility violation) is computed as follows, 
	\begin{equation*}
		\begin{split}
			q'_{x,y,z} &=\begin{cases}
				0 &~\text{if $q_{x,y,z} \le 0$}\\
				q_{x,y,z} &~\text{otherwise}
			\end{cases}\\
			\pythoninline{returndata['Num_err'][0]} &=\max_{x,y,z}( \abs{q'_{x,y,*} - b^\y_{x,y}}, \abs{q'_{x,*,z} - b^\z_{x,z}}, -q_{x,y,z})
		\end{split}
	\end{equation*} 
	The dual feasibility of~\eqref{eq:BROJA-DExpCone} is
	\begin{equation}\label{eq:daul-feas}
		\lambda_{x,y}	+	\lambda_{x,z}	+	\mu_{*,y,z}	+ 1	+	\ln(-\mu_{x,y,z}) \ge 0
	\end{equation}
	For dual feasibility violation, we check the non-negativity of Equation~\eqref{eq:daul-feas}. So, the error\pythoninline{returndata['Num_err'][1]} is equal to 
	\begin{equation*}
		\min_{x,y,z}(\lambda_{x,y}+\lambda_{x,z}+\mu_{*,y,z}+1+\ln(-\mu_{x,y,z}), 0)
	\end{equation*}
	When $w$ is the optimal solution of~\eqref{eq:BROJA-ExpCone}, we have 
	\begin{equation*}
	-\sum_{x,y,z} r_{x,y,z} = \sum_{x,y,z} q_{x,y,z}\log\frac{q_{x,y,z}}{q_{*,y,z}} = -H(X\mid Y,Z).
	\end{equation*}
	The duality gap of~\eqref{eq:BROJA-ExpCone} and~\eqref{eq:BROJA-DExpCone} is 
	\begin{equation}\label{eq:d-gap}
		-H(X\mid Y,Z) + \lambda^Tb,
	\end{equation}
	where%
	\begin{equation*}	 
			\lambda^Tb =\sum_{x,y}\lambda_{x,y}b^\y_{x,y} +\sum_{x,z}\lambda_{x,z}b^\z_{x,z}.
	\end{equation*}
	Since weak duality implies $H(X\mid Y,Z)\le \lambda^Tb$, so we are left to check the non negativity of~\eqref{eq:d-gap} to inspect the duality gap. So,\pythoninline{returndata['Num_err'][2]} is given by,%
	\begin{equation*}
			\max(-H(X\mid Y,Z) + \lambda^Tb,0)
	\end{equation*}
\section{Tests}\label{sec:case-study}In this section, we will test the performance of \textsc{Broja\_2pid} on three types of instances. We will describe each type of instances and show the results of testing \textsc{Broja\_2pid} against each one of them. The first two types are used as primitive validation tests. However, the last type is used to evaluate the accuracy and efficiency of \textsc{Broja\_2pid} in computing the partial information decomposition. We used a computer server with Intel(R) Core(TM) i7-4790K CPU (4 cores) and 16GB of RAM to solve the instances. All computations were done using one core. In all our instances, the vectors $b^\y$ and $b^\z$ are marginals computed from an input probability distribution $p$ on $\mathbf{X}\times\mathbf{Y}\times\mathbf{Z}.$

\subsection{Paradigmatic Gates}
The following set of instances have been studied extensively throughout the literature. The partial information decomposition of the set of instances is known~\cite{griffith2014quantifying}. Despite their simplicity, they acquire desired properties of shared or synergistic quantities. 
\subsubsection{Data}
The first type of instances is based on the ``gates'' (\textsc{Rdn}, \textsc{Unq}, \textsc{Xor}, \textsc{And}, \textsc{RdnXor}, \textsc{RdnUnqXor}, \textsc{XorAnd}) described in Table~1 of~\cite{bertschinger-rauh-olbrich-jost-ay:quantify:2014}.  Each gate is given as a function $(x,y,z)=\texttt G(W)$ which maps a (random) input~$W$ to a triple $(x,y,z)$. The inputs are sampled uniformly at random, whereas, in Table~1 of~\cite{bertschinger-rauh-olbrich-jost-ay:quantify:2014} the inputs are independent and identically distributed.

\subsubsection{Testing}
All the gates are implemented as dictionaries and\pythoninline{pid()} is called successively with different printing modes to compute them. The latter is coded into the script file at the Github directory \shellinline{Testing/test_gates.py}. 
The values of the partial information decomposition for all the gates distributions (when computed by\pythoninline{pid()}) were equal to the actual values up to precision error of order $10^{-9}$ and the computations were done in less than a millisecond.

\subsection{\textsc{Copy} Gate}
The \textsc{Copy} gate requires a large number of variables and constraints-- see below for details. So, we used it to test the memory efficiency of the \textsc{Broja\_2pid} solver. Since its decomposition is known, it also provides to some extent a validation for the correctness of the solution in large systems. 
\subsubsection{Data}
\textsc{Copy} gate is the mapping of $(y,z)$ chosen uniformly at random to a triplet $(x,y,z)$ where $x = (y,z)$. The \textsc{Copy} distribution overall size scales as $|\mathbf{Y}|^2\times |\mathbf{Z}|^2$ where $y,z\in\mathbf{Y}\times\mathbf{Z}$. Proposition 18 in~\cite{bertschinger-rauh-olbrich-jost-ay:quantify:2014} shows that the partial information decomposition of~\textsc{Copy} gate is 
\begin{subequations}
	\begin{align}
		\CI(X;Y,Z)				&= 0\notag\\
		\SI(X;Y,Z)				&= \MI(Y;Z)\notag\\
		\UI(X;Y\backslash Z)	&= H(Y\mid Z)\notag\\
		\UI(X;Z\backslash Y)	&= H(Z\mid Y)\notag
	\end{align}
\end{subequations}
Since $Y$ and $Z$ are independent random variables, then $\UI(X;Y\backslash Z) = H(Y)$ and $\UI(X;Z\backslash Y)= H(Z)$ and $SI(Y;Z) = 0$.
\subsubsection{Testing} %
The \textsc{Copy} distributions is generated for different sizes of $\mathbf{Y}$ and $\mathbf{Z}$ where $\mathbf{Y}=[m]$ and $\mathbf{Z}=[n]$ for $m,n\in\NN\backslash\{0\}$. Then \pythoninline{pid()} is called to compute the partial information decomposition for each pair of $m,n$. Finally, the\pythoninline{returndata} dictionary is printed along with the running time of the \textsc{Broja\_2pid} solver and the deviations of\pythoninline{returndata['UIY']} and\pythoninline{returndata['UIZ']} from $H(Y)$ and $H(Z)$ respectively. The latter process is implemented in\shellinline{Testing/test_large_copy.py}. The worst deviation was of percentage at most $10^{-8}$ for any $m,n\le 100.$

\subsection{Random Probability Distributions}
This is the main set of instances for which we test the efficiency of \textsc{Broja\_2pid} solver. It has three subsets of instance where each one of them is useful for an aspect of efficiency when the solver is used against large systems. This set of instances had many hard distributions in the sense that the feasible region of~\eqref{eq:broja-short-def} is ill or its solution lies on the boundary.
\subsubsection{Data}
The last type of instances are joint distributions of $(X,Y,Z)$ sampled uniformly at random over the probability simplex. We have three different sets of the joint distributions depending on the size of $\mathbf{X},\mathbf{Y},$ and $\mathbf{Z}$. 
\begin{enumerate}[label=\alph{enumi})]
	\item For set~1, we fix $|\mathbf{X}|=|\mathbf{Y}|=2$ and vary $|\mathbf{Z}|$ in $\{2,3,\dots,14\}$. Then, for each size of $Z$, we sample uniformly at random 500 joint distribution of $(X,Y,Z)$ over the probability simplex.
	\item For set~2, we fix $|\mathbf{X}|=|\mathbf{Z}|=2$ and vary $|\mathbf{Y}|$ in $\{2,3,\dots,14\}$. Then, for each value of $|Y|$, we sample uniformly at random 500 joint distribution of $(X,Y,Z)$ over the probability simplex.
	\item For set~3, we fix $|\mathbf{X}|=|\mathbf{Y}|=|\mathbf{Z}|= s$ where $s\in\{8,9,\dots,18\}$. Then, for each $s$, we sample uniformly at random 500 joint distribution of $(X,Y,Z)$ over the probability simplex.  
\end{enumerate}
Note that in each set, instances are grouped according to the varying value, i.e., $|\mathbf{Y}|,|\mathbf{Z}|,$ and $s$ respectively.%

\subsubsection{Testing}
The instances were generated using the Python script\shellinline{Testing/test_large_randompdf.py}. The latter script takes as command-line arguments $|\mathbf{X}|,|\mathbf{Y}|,|\mathbf{Z}|$ and the number of joint distributions of $(X,Y,Z)$ the user wants to sample from the probability simplex. E.g. if the user wants to create the instance of set~1 with $|\mathbf{Z}|= 7$ then the corresponding command-line is\shellinline{python3 test_large_randompdf.py 2 2 7 500}. The script outputs the\pythoninline{returndata} along with the running time of \textsc{Broja\_2pid} solver for each distribution and finally it prints the empirical average over all the distributions of $SI(X;Y,Z), UI(X;Y\backslash Z), UI(X;Y\backslash Z),$ $CI(X;Y,Z),$ and of the running time of \textsc{Broja\_2pid} solver.

In what follows for each of the sets, we look at $UI(X;Y\backslash Z)$ to validate the solution, the\pythoninline{returndata['Num_err']} triplet to examine the quality of the solution, and the running time to analyze the efficiency of the solver. 

\paragraph{Validation.} Sets~1 and~2 are mainly used to validate the solution of \textsc{Broja\_2pid}. For set~1, when $|\mathbf{Z}|$ is considerably larger than $|\mathbf{Y}|$, the amount of unique information that $Y$ has about $X$ is more likely to be small for any sampled joint distribution. So for set~1, the average $\UI(X;Y\backslash Z)$ is expected to decrease as the size of $Z$ increases. Whereas for set~2, $\UI(X;Y\backslash Z)$ is expected to increase as the size of $Y$ increases, i.e., when $|\mathbf{Y}|$ is considerably larger that $|\mathbf{Z}|$. \textsc{Broja\_2pid} shows such behavior of $\UI(X;Y\backslash Z)$ on the instances of  sets~1 and~2 see Figures~\ref{fig:UIY}.
\begin{figure}[H]
	\centering
	\subfigure[$\UI(X;Y\backslash Z)$ of set~1]{\includegraphics[height=4.96cm]{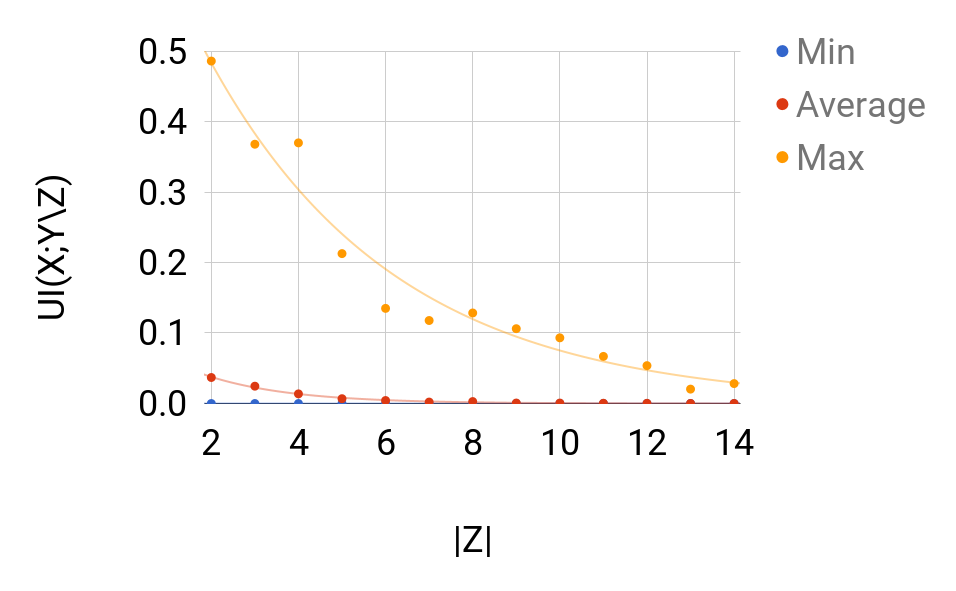}}
	\subfigure[$\UI(X;Y\backslash Z)$ of set~2]{\includegraphics[height=4.96cm]{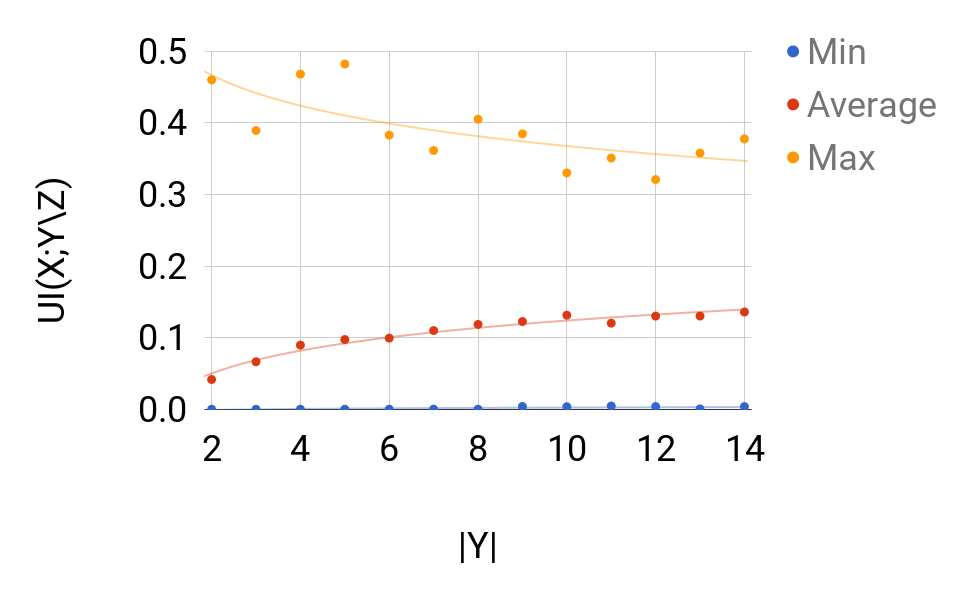}}
	\caption{For each group of instances in sets~1 and 2: (a) and (b) show the instance with the largest $\UI(X;Y\backslash Z)$, the average value of $\UI(X;Y\backslash Z)$ for the instances, and the instance with the smallest $\UI(X;Y\backslash Z)$.}
	\label{fig:UIY}
\end{figure}

\paragraph{Quality.} The solver did well on most of the instances. The percentage of solved instances to optimality was at least $99\%$ for each size in any set of instances. In Figures~\ref{fig:err}, we plot the successfully solved instances against the maximum value of the numerical error triplet\pythoninline{returndata['Num_err']}. On one hand, these plots show that whenever an instance is solved successfully the quality of the solution is good. On the other hand, when the Cone Programming solver fail to find an optimal solution for an instance, i.e., the primal feasibility or dual feasiblity or the duality gap is violated. We noticed that the duality gap,\pythoninline{returndata['Num_err'][2]}, was very large. Thus, these results reflect the reliability of the solution returned by \textsc{Broja\_2pid}. We may note that even when \textsc{Broja\_2pid} fails to solve an instance to optimality, it will return a solution\footnote{\textsc{Broja\_2pid} raise an expectation if the conic optimization solver fails to return a solution.}.

\begin{figure}[H]
	\centering
	\subfigure[Maximum numerical error of set~1]{\includegraphics[height=4.96cm]{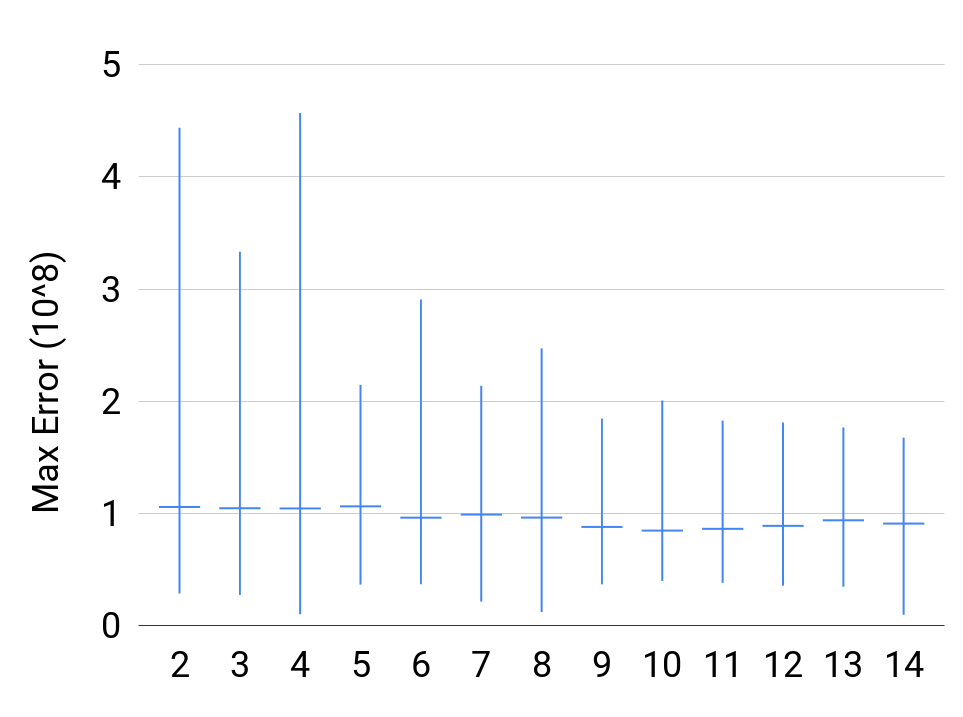}}
	\subfigure[Maximum~numerical error of set~2]{\includegraphics[height=4.96cm]{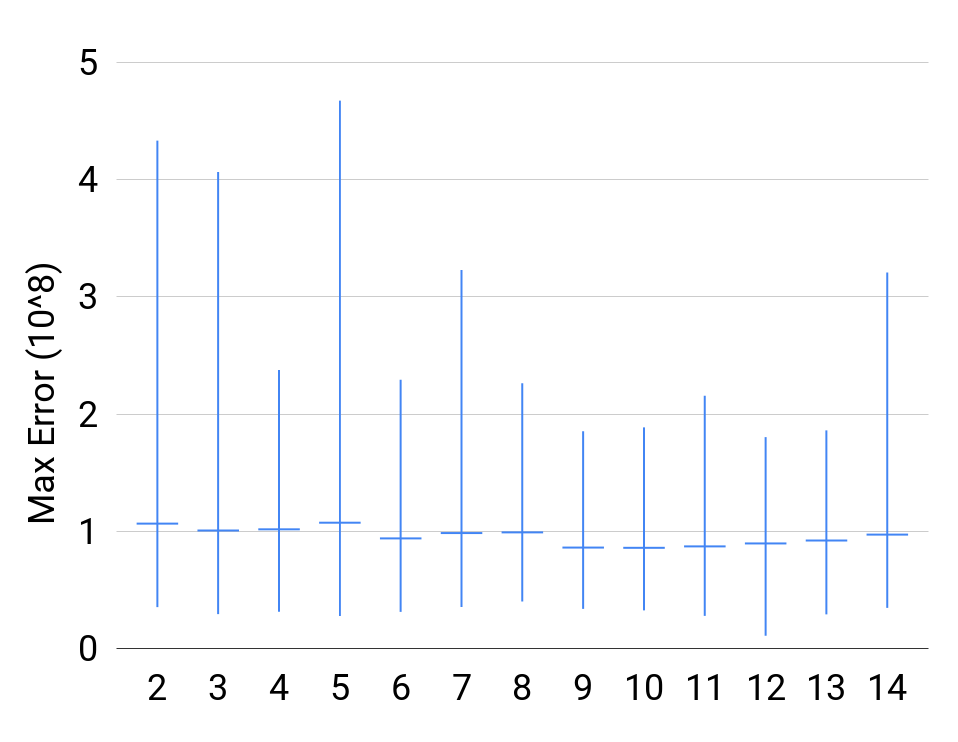}}
	\subfigure[Maximum~numerical error of set~3]{\includegraphics[height=4.96cm]{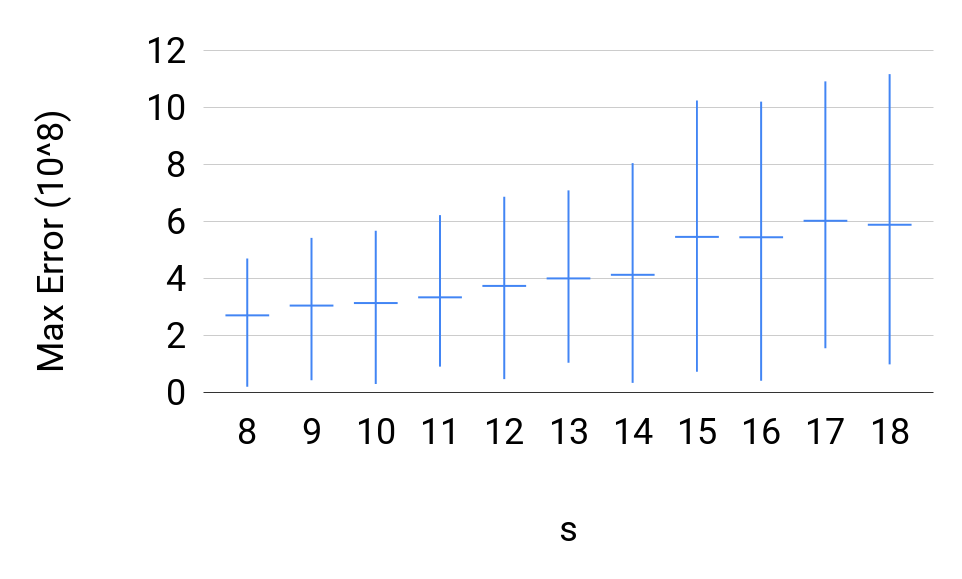}}
	\caption[font=small]{For each group of instances in sets~1,2, and 3: (a), (b), and (c) show the instance with the largest $\epsilon$, the average value of $\epsilon$ for the instances, and the instance with the smallest $\epsilon$; where $\epsilon$ is the maximum numerical error.}
	\label{fig:err}
\end{figure}

\paragraph{Efficiency.} In order to test the efficiency of \textsc{Broja\_2pid} in the sense of running time, we looked at set~3. The reason is that set~1 and~2 are small scale systems. Whereas, set~3 have a large input size mimicking large scale systems. Testing set~3 instances also reveals how the solver empirically scales with the size of input. Figure~\ref{fig:time} shows that the running time for \textsc{Broja\_2pid} solver against large instances was below 50~minutes. Furthermore, the solver has a scaling of $|\mathbf{X}|\times|\mathbf{Y}|\times|\mathbf{Z}|$, so on set~3, it scales as $N^3$ where $N$ is the size of input for the sampled distributions such that $|\mathbf{X}|=|\mathbf{Y}|=|\mathbf{Z}| = N$.
\begin{figure}[H]
	\centering
	\subfigure[$t^{1/6}$ versus $s$]{\includegraphics[height=4.96cm]{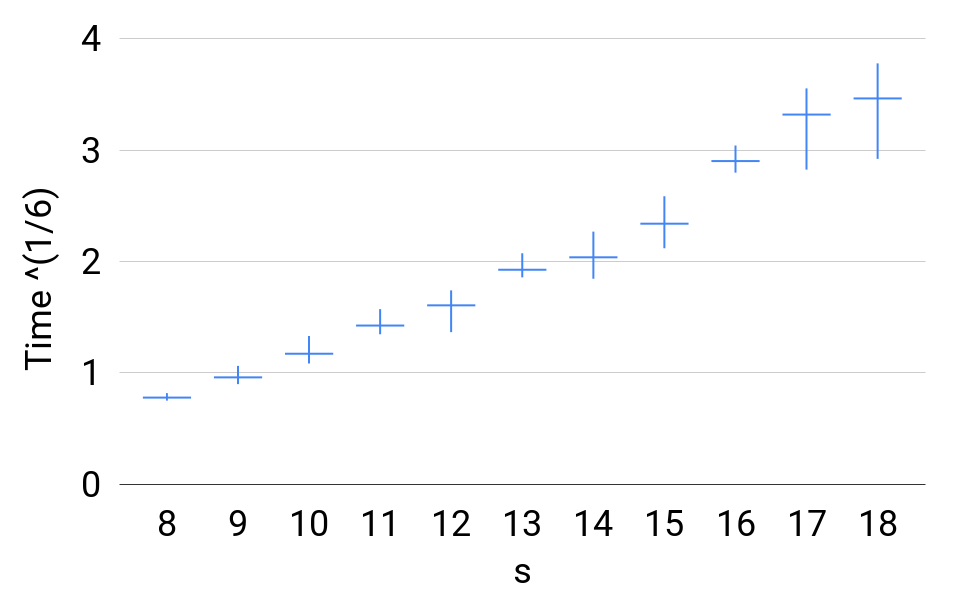}}
	\subfigure[$t^{1/3}$ versus $s$]{\includegraphics[height=4.96cm]{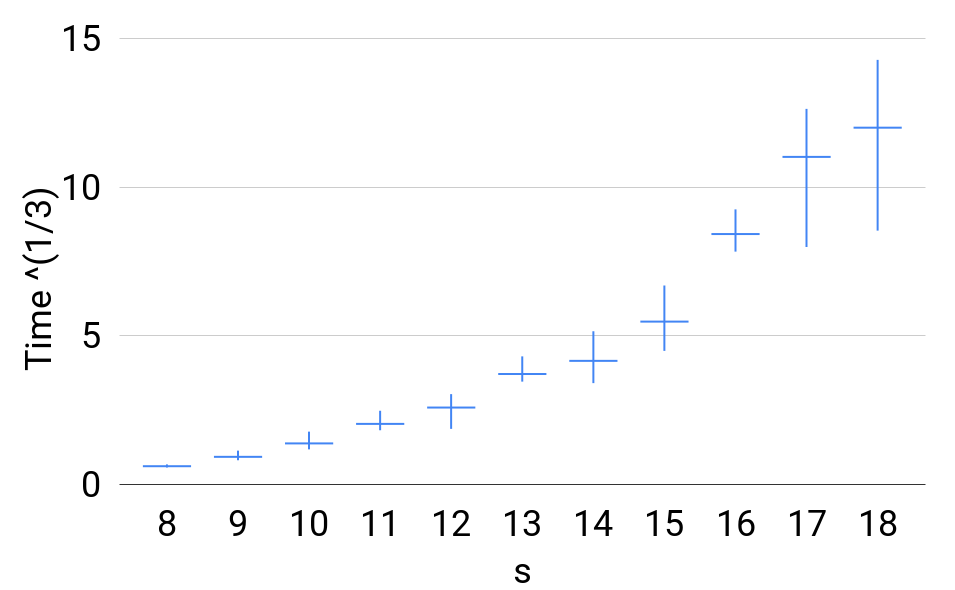}}
	\subfigure[$\nfrac{t}{10^3}$ versus $s$]{\includegraphics[height=4.96cm]{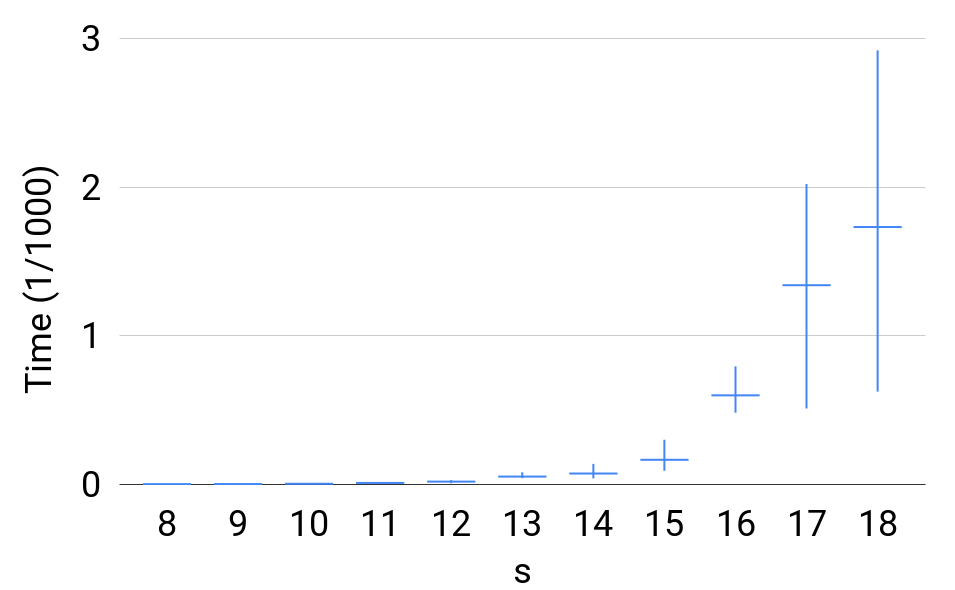}}
	\caption{For each group of instances in set~3: (a), (b), and (c) show the slowest instance, the average value of  running times, and the fastest instance; where the running time of \textsc{Broja\_2pid}, $t$ (secs), is scaled to $t^{1/6},t^{1/3},$ and $\nfrac{t}{10^3}$ respectively.}
	\label{fig:time}
\end{figure}
\section{Outlook}
We are aware of one other Cone Programming solver with support for the Exponential Cone, SCS~\cite{ODonoghue-Chu-Parikh-Boyd:SCS:16}.  We are currently working on adding the functionality to our software.  When that is completed, giving the parameter \verb+cone_solver="SCS"+ to the function \verb+pid()+ will make our software use the SCS-based model instead of the ECOS-based one. (The models themselves are in fact different: SCS requires us to start from the dual exponential cone program~\eqref{eq:BROJA-DExpCone}.
SCS employs parallelized first-order methods which can be run on GPUs, so we expect a considerable speedup for large-scale problem instances.

We may note that other information theoretical functions can also be fitted into the exponential cone. Thus, with some modification, the model can be used to solve other problems. 


\subsection*{Thanks}
The authors would like to thank Patricia Wollstadt and Michael Wibral for their feedback on pre-production versions of our software.



\begin{thebibliography}{999}
\bibliographystyle{unsrt}
\providecommand{\natexlab}[1]{#1}

\bibitem{bertschinger-rauh-olbrich-jost-ay:quantify:2014}
Bertschinger, N.; Rauh, J.; Olbrich, E.; Jost, J.; Ay, N.
\newblock Quantifying unique information.
\newblock {\it Entropy} {\bf 2014}, {\em 16},~2161--2183.

\bibitem{chares2009phd}
Chares, R., 2009.
\newblock Cones and interior-point algorithms for structured convex optimization involving powers and exponentials (Doctoral dissertation, UCL-Université Catholique de Louvain).

\bibitem{Domahidi-Chu-Boyd:ECOS:13}
Domahidi, A.; Chu, E.; Boyd, S.
\newblock {ECOS}: {A}n {SOCP} solver for embedded systems.
\newblock In Proceedings of the  European Control Conference (ECC), Zurich, Switzerland,  17--19 July 2013; pp. 3071--3076. 

\bibitem{gartner2012approximation}
Gärtner B; Matousek J. 
\newblock Approximation algorithms and semidefinite programming. 
\newblock {\it Springer Science \& Business Media} {\bf 2012} Jan 10. 

\bibitem{griffith2014quantifying}
Griffith, V.; Koch, C.
\newblock Quantifying synergistic mutual information. In {\it Guided
  Self-Organization: Inception}; Springer: New York, NY, USA, 2014; pp. 159--190.

\bibitem{harder2013bivariate}
Harder, M.; Salge, C.; Polani, D.
\newblock Bivariate measure of redundant information.
\newblock {\it Phys. Rev.  E} {\bf 2013}, {\em 87},~012130.
\bibitem{makkeh:phd:2018}

Makkeh, A.
\newblock Applications of Optimization in Some Complex Systems.
\newblock Ph.D. Thesis, University of Tartu, Tartu, \mbox{Estonia, forthcoming}.

\bibitem{makkeh2017bivariate}
Makkeh, A.; Theis, D. O.; Vicente, R. 
\newblock Bivariate Partial Information Decomposition: The Optimization Perspective.
\newblock {\it Entropy} {\bf 2017}, {\em 19(10)} 530.

\bibitem{ODonoghue-Chu-Parikh-Boyd:oper-split:16}
O'Donoghue, B.; Chu, E.; Parikh, N.; Boyd, S.
\newblock Conic Optimization via Operator Splitting and Homogeneous Self-Dual
  Embedding.
\newblock {\em J. Optim. Theory Appl.} {\bf 2016},
  {\em 169},~1042--1068.

\bibitem{williams2010nonnegative}
{Williams, P.L.; Beer, R.D.
\newblock Nonnegative decomposition of multivariate information.
\newblock {\it arXiv} {\bf 2010}, arXiv:1004.2515.}
\end{thebibliography}
\end{document}